\newtheorem{theorem}{Theorem}[section]
\newtheorem{lemma}[theorem]{Lemma}
\newtheorem{prop}[theorem]{Proposition}
\theoremstyle{definition}
\newtheorem{definition}[theorem]{Definition}
\theoremstyle{remark}
\newtheorem{remark}[theorem]{Remark}
\numberwithin{equation}{section}
\def\N{{\mathbb N}}
\newcommand\T[1]{\mathscr{T}({#1})}
\begin{document}
\title{Orderings of Witzel-Zaremsky-Thompson groups}
\author{Tomohiko Ishida}
\address{Department of Mathematics, 
Kyoto University, 
Kitashirakawa Oiwake-cho, Sakyo-ku, Kyoto 606-8502, Japan.}
\email{ishidat@math.kyoto-u.ac.jp}
\subjclass[2010]{Primary 20F60, Secondary 20B07, 20B27, 20E99, 20F36}
\date{\today}

\begin{abstract}
We prove the orderability of the Witzel-Zaremsky-Thompson group 
for a direct system of orderable groups 
under a certain compatibility assumption. 
\end{abstract}
\keywords{Thompson's groups, Thompson-like groups, Witzel-Zaremsky-Thompson groups, 
braided Thompson group, orderings, braid groups, Dehornoy ordering}

\maketitle

%%%%%%%%%%%%%%%%%%%%%%%%%%%%%%%%%%%%%%%%%%%%%%%%%%%%%%%%%%%%%%%%%%%%%%%%%%%%%%%%%%%%%%%%%%%%%%%%%%%%%%%%%%%%%%%%%%%%%%%%%%%%%%%%%%%%%%%%%
%%%%%%%%%%%%%%%%%%%%%%%%%%%%%%%%%%%%%%%%%%%%%%%%%%%%%%%%%%%%%%%%%%%%%%%%%%%%%%%%%%%%%%%%%%%%%%%%%%%%%%%%%%%%%%%%%%%%%%%%%%%%%%%%%%%%%%%%%
\section{Introduction}

Thompson's groups are 
interesting countable groups  
and several kinds of their generalizations have been developed and studied. 
As one of such examples, 
Witzel and Zaremsky introduced the group 
$\T{G_{\ast}}$ associated to a direct system $(G_{n})$ of groups 
satisfying certain axioms called the cloning system \cite{wz14p}. 
We call it the Witzel-Zaremsky-Thompson group for $(G_{n})$.

The Witzel-Zaremsky-Thompson group $\T{G_{\ast}}$ 
can be considered as 
not only a generalization of Thompson's groups 
but also a limit of the groups $G_{n}$. 
We prove the group $\T{G_{\ast}}$ inherits 
the orderability of the groups $G_{n}$ 
under the assumption 
that there exists an ordering of $G_{n}$ 
which are compatible with the cloning maps. 
A precise definition of the compatibility 
is given in Section \ref{preliminaries}. 
\begin{prop}\label{main}
Let $(G_{n})$ be a cloning system. 
\begin{enumerate}
\item[(i)]
The Witzel-Zaremsky-Thompson group $\T{G_{\ast}}$ 
for $(G_{n})$ is left-orderable 
if the groups $G_{n}$ admit left-orderings 
which are compatible with the cloning maps. 
\item[(ii)]
Suppose that the cloning system is pure. 
The group $\T{G_{\ast}}$ is bi-orderable 
if the groups $G_{n}$ admit bi-orderings 
which are compatible with the cloning maps. 
\end{enumerate}
\end{prop}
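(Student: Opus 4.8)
The plan is to reduce the statement to the construction of a suitable positive cone, i.e.\ a subsemigroup $P\subseteq\T{G_\ast}$ with $\T{G_\ast}=P\sqcup\{1\}\sqcup P^{-1}$, and, for (ii), with $gPg^{-1}=P$ for all $g$. I first recall the bookkeeping for $\T{G_\ast}$: every element is an equivalence class $[T_-,g,T_+]$ of a triple consisting of two finite binary trees with $m$ leaves and an element $g\in G_m$, two triples being identified when they have a common expansion, and the product of $[T_-,g,T_+]$ and $[S_-,h,S_+]$ is computed by expanding both until $T_+$ and $S_-$ coincide with a common tree having $N$ leaves and then multiplying the middle labels in $G_N$. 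Throughout I read ``compatible with the cloning maps'' as saying that each cloning map is an order-embedding, so that the orderings of the $G_n$ assemble into a single ordering on the direct limit $\varinjlim G_n$ taken along the cloning maps. I shall use two standard facts: left-orderability is preserved under extensions, and in an extension $1\to K\to G\to Q\to1$ with $K$ and $Q$ bi-ordered the group $G$ is bi-ordered precisely when the conjugation action of $G$ preserves the bi-ordering of $K$.

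For the pure case (ii) this machinery applies almost directly. Since the cloning system is pure, the homomorphism that forgets the labels and remembers only the tree pair lands in Thompson's group $F$ rather than $V$, and its kernel is the direct limit $\varinjlim G_n$ consisting of the classes $[T,g,T]$. Thus there is a short exact sequence
\[
1\longrightarrow \varinjlim G_n\longrightarrow \T{G_\ast}\longrightarrow F\longrightarrow 1 .
\]
The quotient $F$ is bi-orderable, and the kernel is bi-ordered by the limit of the compatible bi-orderings of the $G_n$. It then remains to check that conjugation in $\T{G_\ast}$ preserves this limit ordering on the kernel; since inner automorphisms of a bi-ordered group are automatically order-preserving, the only thing to verify is that conjugation by a tree element carries a class $[T,g,T]$ to a class whose label is obtained from $g$ by the cloning maps, and hence has the same sign by the compatibility hypothesis. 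This conjugation computation is the main point of (ii).

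The left-orderable case (i) is genuinely harder, because when the system is not pure the label-forgetting map lands in Thompson's group $V$, which contains torsion and so is not left-orderable; there is then no left-orderable quotient through which the tree data can be read, and the ordering must instead be built directly from the $G_n$-orderings ``all the way down''. The model to keep in mind is Dehornoy's ordering of the braid groups, where the non-orderable permutation quotient $S_n$ presents no obstruction because $\sigma$-positivity is detected coherently on the braid generators. Concretely I would define $P$ by a leftmost-difference rule, using that every nontrivial element has a reduced representative: given a reduced $[T_-,g,T_+]$, locate the leftmost leaf at which the element acts nontrivially, and declare the element positive according to the tree-germ comparison there (the domain leaf lying deeper than its image, exactly as in the standard ordering of $F$) when the two trees already differ at that leaf, and otherwise according to the sign in $G_n$ of the local label produced there. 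The role of compatibility is to guarantee that this sign does not depend on how far the representative has been expanded.

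The main obstacle in both parts is the same verification, namely that the prescribed cone is well defined on equivalence classes and closed under multiplication. For (i) one must show that the leftmost-difference rule is unchanged by an expansion and that, after putting two positive elements over a common tree and multiplying the middle labels, the product is again positive; the delicate case is when the permutation underlying some label moves the leftmost leaf to the right, and it is precisely the compatibility of the orders with the cloning maps that forces the contributions to line up, as in the $\sigma$-positive calculus for braids. For (ii), beyond the corresponding closure statement, the crux is the conjugation-invariance noted above. In each case the combinatorial heart of the argument is the interaction between expansion (cloning) and the chosen orderings, and this is where I expect essentially all of the work to lie.
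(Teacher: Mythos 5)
Your part (ii) is essentially the paper's argument and is fine: the paper likewise uses the pure hypothesis to get $\T{G_{\ast}}\cong P\T{G_{\ast}}\rtimes F$, induces the bi-ordering on $P\T{G_{\ast}}$ from the compatible bi-orderings of the $G_{n}$, and (following the Burillo--Gonz\'alez-Meneses argument it cites) relies on exactly the conjugation computation you describe -- $F$ carries $[T,g,T]$ to a class whose label is an iterated cloning image of $g$, so compatibility makes the action order-preserving; indeed you state this step more explicitly than the paper does. The genuine gap is in part (i). Your premise that, since $V$ is not left-orderable, the order ``must be built all the way down'' by a Dehornoy-style leftmost-difference rule is mistaken, and the construction you sketch in its place is both incomplete and unavailable at this level of generality. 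The simpler observation you miss is that no left-orderable quotient is needed: the \emph{sign of the middle label} is already an invariant of the equivalence class. From the cloning axiom $\kappa_{n}^{k}(gh)=\kappa_{n}^{k}(g)\kappa_{n}^{\rho_{n}(g)k}(h)$ one gets $\kappa_{n}^{k}(1)=1$ and $g<_{n}1\Rightarrow\kappa_{n}^{k}(g)<_{n+1}1$, so together with compatibility every expansion preserves the trichotomy of the label, trivial-label classes are exactly the subgroup $F$, and
\[
\Pi=\{[T_{-},g,T_{+}]\in\T{G_{\ast}} : 1<_{n}g\}\amalg\{f\in F : 1<_{F}f\}
\]
is well defined. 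Closure under multiplication is immediate from the product rule (the two labels are cloned to a common tree and multiplied, and in a left-ordering positive times positive is positive, with the mixed cases trivial since a trivial label clones to a trivial label), and trichotomy is the label trichotomy refined by $<_{F}$ on ties. This two-piece cone is the entire proof of (i) in the paper.

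By contrast, your leftmost-difference rule rests on notions that do not exist for an abstract cloning system: there is no ``local label produced at a leaf,'' since the label is a single element $g\in G_{n}$ and the axioms provide no leafwise decomposition of it (braids have one, via strands, which is why the $\sigma$-positive calculus works there, but general $(G_{n})$ do not); it is also unclear how your ``reduced representative'' and ``leftmost leaf at which the element acts nontrivially'' are to be defined from the data $(\rho_{n},\kappa_{n}^{k})$ alone. Moreover, you explicitly defer the two verifications -- invariance of the rule under expansion and closure of the cone under multiplication -- that would constitute the actual proof, flagging them as ``where essentially all of the work'' lies. So as written, part (i) is a plan whose central device cannot be implemented for general cloning systems, whereas the compatibility hypothesis was designed precisely to make the far cruder label-sign invariant work.
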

The statement (ii) in Proposition \ref{main} 
is proved 
by an argument which is a straightforward generalization 
of that used in \cite{bg08}.
The author does not know
whether the assumption that the cloning system is pure 
is essential or not. 

%%%%%%%%%%%%%%%%%%%%%%%%%%%%%%%%%%%%%%%%%%%%%%%%%%%%%%%%%%%%%%%%%%%%%%%%%%%%%%%%%%%%%%%%%%%%%%%%%%%%%%%%%%%%%%%%%%%%%%%%%%%%%%%%%%%%%%%%%
%%%%%%%%%%%%%%%%%%%%%%%%%%%%%%%%%%%%%%%%%%%%%%%%%%%%%%%%%%%%%%%%%%%%%%%%%%%%%%%%%%%%%%%%%%%%%%%%%%%%%%%%%%%%%%%%%%%%%%%%%%%%%%%%%%%%%%%%%
\section{Preliminaries}\label{preliminaries}

Let $(G_{n})$ be a direct system of groups. 
That is, an injective homomorphism 
$G_{n}\hookrightarrow G_{n+1}$ is given for each $n\in\mathbb{N}$. 
The Witzel-Zaremsky-Thompson group $\T{G_{\ast}}$ is 
simply a certain subgroup of the cancellative group of the Brin-Zappa-Sz\'{e}p product 
$G\bowtie\mathscr{F}$, 
where $G$ is the direct limit of the system $(G_{n})$ 
and $\mathscr{F}$ is the monoid of binary forests. 
However we recall the definition of $\T{G_{\ast}}$ 
in a fashion describing its elements.  

Let $\mathfrak{S}_{n}$ be the symmetric group on the set $\{ 1, \dots , n\}$. 
Set the map $\varsigma _{n}^{k}\colon\mathfrak{S}_{n}\to\mathfrak{S}_{n+1}$ 
by 
\[
\varsigma_{n}^{k}(g)m
=\left\{
\begin{array}{ll}
gm & \text{if }m \leq k, gm \leq gk, \\
gm+1 & \text{if }m < k, gm > gk, \\
g(m-1) & \text{if }m > k, g(m-1) < gk, \\
g(m-1) + 1 & \text{if }m > k, g(m-1) \geq gk.
\end{array}
\right.
\]
\begin{definition}
Suppose 
that a group homomorphism 
$\rho _{n}\colon G_{n}\to\mathfrak{S}_{n}$  
and an injective map $\kappa _{n}^{k}\colon G_{n}\to G_{n+1}$ is given 
for each $n\in\mathbb{N}$ and for each $k=1, \dots , n$. 

The triple $(G_{n})=((G_{n}), (\rho _{n}), (\kappa _{n}^{k}))$ 
is a {\it cloning system} if the following three axioms are satisfied. 
\begin{enumerate}
\item
$\kappa _{n}^{k}(gh)=\kappa _{n}^{k}(g)\kappa _{n}^{\rho _{n}(g)k}(h)$ 
for $g, h\in G_{n}$, 
\item
$\kappa _{n+1}^{k}\circ\kappa _{n}^{l}
=\kappa _{n+1}^{l+1}\circ\kappa _{n}^{k}$ 
for $1\leq k<l\leq n$, and
\item
$\rho _{n}(\kappa _{n}^{k}(g))
=\varsigma _{n}^{k}(\rho _{n}(g))
\text{ or }s_{k}\varsigma _{n}^{k}(\rho _{n}(g))$ 
for $g\in G_{n}$ and for $1\leq k\leq n$, 
where the symbol $s_{k}$ means 
the transposition $(k, k+1)\in\mathfrak{S}_{n+1}$. 
\end{enumerate}
The maps $\kappa _{n}^{k}\colon G_{n}\to G_{n+1}$ 
are called the {\it cloning maps} of the cloning system $(G_{n})$. 
\end{definition}
Suppose that 
a cloning system $(G_{n})=((G_{n}), (\rho _{n}), (\kappa _{n}^{k}))$ 
is given. 
A tree diagram is a triple $(T_{-}, g, T_{+})$, 
where $g\in G_{n}$ for certain $n\in\mathbb{N}$ 
and $T_{\pm}$ are rooted planar binary trees with $n$ leaves. 
A {\it simple expansion} of $(T_{-}, g, T_{+})$ 
is a tree diagram of the form $(T'_{-}, \kappa _{n}^{k}(g), T'_{+})$. 
Here, $T'_{-}$ and $T'_{+}$ are trees 
obtained by adjoining carets to the $k$-th and $(\rho (g)k)$-th leaves 
of $T_{-}$ and $T_{+}$, respectively. 
An {\it expansion} of $(T_{-}, g, T_{+})$ 
is a tree diagram obtained as an iterated simple expansions. 
Two tree diagrams are {\it equivalent} 
if they have a common expansion. 
The equivalence class 
represented by a tree diagram $(T_{-}, g, T_{+})$ 
will be denoted by $[T_{-}, g, T_{+}]$. 
The {\it Witzel-Zaremsky-Thompson group} $\T{G_{\ast}}$ 
consists of equivalence classes of tree diagrams as a set. 
For two elements $[S_{-}, f, S_{+}], [T_{-}, g, T_{+}]\in\T{G_{\ast}}$, 
their product is defined as follows. 
There always exist tree diagrams $(S'_{-}, f', S'_{+})$ and $(T'_{-}, g', T'_{+})$
representing $[S_{-}, f, S_{+}]$ and $[T_{-}, g, T_{+}]$, respectively 
and such that $S'_{+}=T'_{-}$. 
The product $[S_{-}, f, S_{+}][T_{-}, g, T_{+}]$ 
is defined to be $[S'_{-}, f'g', T'_{+}]$. 
This is well-defined. 
The cloning system $(G_{n})$ is {\it pure} 
if the homomorphism $\rho_{n}\colon G_{n}\to\mathfrak{S}_{n}$ 
is trivial for each $n\in\N$. 

The subgroup of $\T{G_{\ast}}$ 
consisting of elements represented by the form $(T_{-}, 1, T_{+})$ 
is isomorphic to the Thompson's group $F$. 
and we denote this subgroup again by $F$. 
If we denote by $P\T{G_{\ast}}$ 
the kernel of natural projection 
$p\colon \T{G_{\ast}}\to V$, 
then the group $P\T{G_{\ast}}$ 
consists of the elements 
of the form $[T, g, T]$, 
where $g\in{\rm Ker}\rho_{n}$
and $n$ is the number of leaves of $T$. 
If the cloning system $(G_{n})$ is pure, 
then the image of the projection 
$p\colon \T{G_{\ast}}\to V$ 
is equal to Thompson's group $F<V$ 
and thus there exists an isomorphism 
$\T{G_{\ast}}\cong P\T{G_{\ast}}\rtimes F$. 

When we fix a left-ordering or a bi-ordering $<_{n}$ 
of $G_{n}$ for each $n$, 
we say the orderings are {\it compatible with the cloning maps} $\kappa _{n}^{k}$'s 
if $1<_{n+1}\kappa _{n}^{k}(g)$ for any $n$ and any $k$ 
whenever $1<_{n}g$. 

%%%%%%%%%%%%%%%%%%%%%%%%%%%%%%%%%%%%%%%%%%%%%%%%%%%%%%%%%%%%%%%%%%%%%%%%%%%%%%%%%%%%%%%%%%%%%%%%%%%%%%%%%%%%%%%%%%%%%%%%%%%%%%%%%%%%%%%%%
%%%%%%%%%%%%%%%%%%%%%%%%%%%%%%%%%%%%%%%%%%%%%%%%%%%%%%%%%%%%%%%%%%%%%%%%%%%%%%%%%%%%%%%%%%%%%%%%%%%%%%%%%%%%%%%%%%%%%%%%%%%%%%%%%%%%%%%%%
\section{Proof of the main proposition}\label{proof}
\begin{proof}[Proof of Proposition \ref{main}]
Let $(G_{n})=((G_{n}),  (\rho _{n}), (\kappa _{n}^{k}))$ 
be a cloning system. 
Suppose that a left-ordering $<_{n}$ of the group $G_{n}$ is given for each $n$ 
and the orderings $<_{n}$ are compatible with the cloning maps. 
Since the Thompson's group $F$ is known to be bi-orderable,  
we fix a left-ordering $<_{F}$ of $F$. 
Set the subset $\Pi$ 
of the Witzel-Zaremsky-Thompson group $\T{G_{\ast}}$ for $(G_{n})$ by 
\[ \Pi =\{ [T_{-}, g , T_{+}]\in\T{G_{\ast}}; 
g\in G_{n}, 1<_{n} g\} 
\amalg\{f\in F; 1<_{F} f\}. \]
Since the orders $<_{n}$ are compatible with the cloning maps, 
the set $\Pi$ is well-defined. 
Further it is easy to verify 
that $\Pi$ is a positive cone 
and thus defines a left-ordering of $\T{G_{\ast}}$. 
This completes the proof of the statement (i). 

If the groups $G_{n}$ admit bi-orderings 
which are compatible with the cloning maps, 
then the bi-ordering on $P\T{G_{\ast}}$ 
is induced. 
Under the assumption that the cloning system $(G_{n})$ is pure, 
since both the groups $P\T{G_{\ast}}$ and $F$ 
are bi-orderable, 
the group $\T{G_{\ast}}$ 
which is isomorphic to 
the semi-direct product $P\T{G_{\ast}}\rtimes F$ 
is also. 
This completes the proof of the statement (ii). 
\end{proof}
\begin{remark}
If we fix a left-ordering or bi-ordering 
on the group $\T{G_{\ast}}$, 
its restriction on $P\T{G_{\ast}}$ 
uniquely determines a family of left-ordering or bi-ordering  respectively 
on the groups $G_{n}$. 
In fact, 
if for each rooted planar binary tree $T$ with $n$ leaves, 
we denote by $G_{T}$ the subgroup of $P\T{G_{\ast}}$ 
consisting of elements represented by tree diagrams of the form $(T, g, T)$, 
where $g\in G_{n}$, 
then $G_{n}$ is isomorphic to $G_{n}$. 
Furthermore, if $T$ and $T'$ are rooted planar binary trees 
with same number of leaves, 
then in $\T{G_{\ast}}$ 
the subgroups group $G_{T}$ and $G'_{T}$ are conjugate to each other 
by an element of $F$. 
Thus if we assume the cloning system $(G_{n})$ is pure, 
the converse of each statement of Proposition \ref{main} 
always holds. 
\end{remark}

%%%%%%%%%%%%%%%%%%%%%%%%%%%%%%%%%%%%%%%%%%%%%%%%%%%%%%%%%%%%%%%%%%%%%%%%%%%%%%%%%%%%%%%%%%%%%%%%%%%%%%%%%%%%%%%%%%%%%%%%%%%%%%%%%%%%%%%%%
%%%%%%%%%%%%%%%%%%%%%%%%%%%%%%%%%%%%%%%%%%%%%%%%%%%%%%%%%%%%%%%%%%%%%%%%%%%%%%%%%%%%%%%%%%%%%%%%%%%%%%%%%%%%%%%%%%%%%%%%%%%%%%%%%%%%%%%%%
\section{Examples}

In this section we discuss examples of Witzel-Zaremsky-Thompson groups 
to which Proposition \ref{main} is applicable. 

\subsection{The braided Thompson group}
Let $B_{n}$ be the braid group of $n$ strands 
and $\sigma _{1}, \dots , \sigma _{n-1}\in B_{n}$ 
the Artin generators.  
Taking $\rho _{n}\colon B_{n}\to\mathfrak{S}_{n}$ 
to be the natural projection 
and setting $\kappa _{n}^{k}\colon B_{n}\to B_{n+1}$ by
\[ \kappa _{n}^{k}(\sigma _{i})
=\left\{
\begin{array}{ll}
\sigma _{i+1} & \text{if }k<i, \\
\sigma _{i+1}\sigma _{i} & \text{if }k=i, \\
\sigma _{i}\sigma _{i+1} & \text{if }k=i+1, \\
\sigma _{i} & \text{if }i+1<k,
\end{array}
\right. \] 
we have a well-defined cloning system on $(B_{n})$. 
Then the Witzel-Zaremsky-Thompson group 
$\T{B_{n}}$ for $(B_{n})$ 
is isomorphic to the group 
called the braided Thompson group $BV$
which was independently introduced by Brin and Dehornoy 
\cite{brin07}\cite{dehornoy06}. 

Recall that $\beta\in B_{n}$ is positive 
with respect to the Dehornoy ordering 
if and only if 
$\beta$ is represented by a word $w$ in the Artin generators 
which satisfies the following condition (D): 

\noindent  \textbf{(D)} 
$\sigma _{i}$ occurs in $w$ 
but $\sigma _{1}^{\pm 1}, \dots , \sigma _{i-1}^{\pm 1}, \sigma _{i}^{-1}$ does not 
for certain $i$. 

It is not difficult to verify only by definition of the cloning maps 
that if $w$ is a representation of $\beta$ satisfying the condition (D) 
then $\kappa _{n}^{k}(w)$ also. 
Hence the following lemma holds:
\begin{lemma}\label{Bn}
The Dehornoy orderings of the braid groups $B_{n}$
are compatible with the cloning maps of $(B_{n})$ defined above. 
\end{lemma}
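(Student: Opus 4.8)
The plan is to invoke Dehornoy's word characterization directly: to see that $\kappa_n^k$ sends positive braids to positive braids, it suffices to show that whenever a word $w$ in the Artin generators of $B_n$ satisfies condition (D), the word $\kappa_n^k(w)$ representing $\kappa_n^k(\beta)$ again satisfies (D). The first step is thus to make precise what ``applying $\kappa_n^k$ to a word'' means. Since $\kappa_n^k$ is a genuine function on $B_n$ satisfying axiom (1), iterating that axiom on $w=x_1x_2\cdots x_m$ gives
\[
\kappa_n^k(x_1x_2\cdots x_m)=\kappa_n^{k_0}(x_1)\,\kappa_n^{k_1}(x_2)\cdots\kappa_n^{k_{m-1}}(x_m),\qquad k_0=k,\ k_j=\rho_n(x_j)\,k_{j-1},
\]
an equality in $B_{n+1}$, so each letter is cloned with a superscript recording how the position $k$ is transported by the crossings read so far. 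Using $\kappa_n^k(1)=1$ and axiom (1) once more yields $\kappa_n^{k}(\sigma_a^{-1})=\kappa_n^{s_a k}(\sigma_a)^{-1}$, which I would record as an explicit four-line table mirroring the definition of $\kappa_n^k(\sigma_i)$; the upshot is that the clone of any letter $\sigma_a^{\pm1}$ only involves generators of index at least $a$.

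Next I would exploit that (D) forbids $\sigma_1^{\pm1},\dots,\sigma_{i-1}^{\pm1}$ outright: every letter $x_j$ of $w$ is $\sigma_a^{\pm1}$ with $a\ge i$, so each $\rho_n(x_j)$ is a transposition $s_a=(a,a+1)$ with $a\ge i$, which fixes every position below $i$ and preserves $\{i,i+1,\dots,n\}$. Hence the superscripts never leave the set they start in: if $k<i$ then $k_j=k$ for all $j$, and if $k\ge i$ then $k_j\ge i$ for all $j$. This dichotomy organizes the argument. When $k<i$ every letter is cloned with superscript $k<a$, so $\sigma_a^{\pm1}\mapsto\sigma_{a+1}^{\pm1}$ uniformly; the image is $w$ with all indices shifted up by one, and (D) holds for $\kappa_n^k(w)$ with $i+1$ in place of $i$. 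When $k\ge i$, I would track the smallest generator index in the clone of each letter: reading the table, a letter $\sigma_a^{\pm1}$ with $a\ge i+1$ is sent to a word in generators of index $\ge i+1$, while the distinguished positive letter $\sigma_i$ (the only source of index $i$, always positive) is sent to one of $\sigma_{i+1}\sigma_i$, $\sigma_i\sigma_{i+1}$, or $\sigma_i$, each containing $\sigma_i$ positively. Therefore $\kappa_n^k(w)$ contains $\sigma_i$, contains no generator of index below $i$, and contains no $\sigma_i^{-1}$, so (D) holds with the same index $i$.

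The main obstacle is the bookkeeping in the case $k\ge i$: one must be certain that the moving superscript $k_j$ can never drop $\sigma_i$'s clone to a negative occurrence of $\sigma_i$ nor introduce a spurious lower-index generator. The delicate configurations are $k_j\in\{i,i+1\}$, where $\sigma_i$ clones to the two-letter words $\sigma_{i+1}\sigma_i$ or $\sigma_i\sigma_{i+1}$ and the low generator must emerge positive, together with the symmetric check that negative letters $\sigma_a^{-1}$ (which force $a\ge i+1$) never clone to anything of index $\le i$. Once the four-line tables for positive and negative letters are in place these become immediate case checks, so the real content of the lemma is the single observation that (D) confines $w$ to the generators $\sigma_i,\dots,\sigma_{n-1}$, on which the cloning maps act by an index-preserving-or-raising rule.
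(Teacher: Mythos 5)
Your proof is correct and takes essentially the same approach as the paper: the paper's entire argument for Lemma \ref{Bn} is the asserted (but unproved) observation that if $w$ satisfies condition (D) then so does $\kappa_n^k(w)$, which is exactly what you verify. Your bookkeeping with the transported superscripts $k_j=\rho_n(x_j)k_{j-1}$, the identity $\kappa_n^k(\sigma_a^{-1})=\kappa_n^{s_ak}(\sigma_a)^{-1}$, and the dichotomy $k<i$ versus $k\geq i$ correctly supplies the details the paper leaves to the reader, including the precise meaning of applying $\kappa_n^k$ to a word, which the paper glosses over.
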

By Proposition \ref{main} and Lemma \ref{Bn}, 
we have in easy way 
the following theorem which was first proved in \cite{dehornoy06}. 
\begin{theorem}
The braided Thompson group $BV$ is left-orderable. 
\end{theorem}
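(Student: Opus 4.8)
The plan is to obtain the theorem as an immediate consequence of the two results already established, namely Proposition \ref{main}(i) and Lemma \ref{Bn}, applied to the cloning system $(B_{n})$. Since the braided Thompson group $BV$ has been identified with the Witzel-Zaremsky-Thompson group $\T{B_n}$ for $(B_{n})$, as recalled above following Brin and Dehornoy, it suffices to verify that $\T{B_n}$ meets the hypotheses of Proposition \ref{main}(i) and then quote that proposition.

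First I would recall that the Dehornoy ordering equips each braid group $B_{n}$ with a genuine left-ordering, whose positive cone is exactly the set of braids admitting a word representative satisfying condition (D). This supplies the family of left-orderings $<_{n}$ required as input to the proposition. Next I would invoke Lemma \ref{Bn}, which asserts precisely that these Dehornoy orderings are compatible with the cloning maps $\kappa_{n}^{k}$ of $(B_{n})$ in the sense of Section \ref{preliminaries}. With the compatible family $(<_{n})$ in hand, Proposition \ref{main}(i) applies verbatim: its construction of the positive cone $\Pi$ produces a left-ordering of $\T{B_n}$, and transporting this ordering across the isomorphism $\T{B_n}\cong BV$ completes the proof.

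There is no serious obstacle remaining at this final stage, since the substantive content has already been packaged into Lemma \ref{Bn}, the stability of condition (D) under cloning, and into the verification that $\Pi$ is a positive cone inside the proof of Proposition \ref{main}. What does deserve a remark is why only part (i) is available here. The homomorphism $\rho_{n}\colon B_{n}\to\mathfrak{S}_{n}$ is the natural projection, which is nontrivial for $n\ge 2$, so the cloning system $(B_{n})$ is not pure and part (ii) cannot be invoked. This is consistent with the fact that the braid groups $B_{n}$ are not bi-orderable for $n\ge 3$ and the Dehornoy ordering is only a left-ordering; accordingly the conclusion we can extract is exactly the \emph{left}-orderability of $BV$, as stated.
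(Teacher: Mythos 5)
Your proposal is correct and follows exactly the paper's route: the paper deduces the theorem immediately from Proposition \ref{main}(i) and Lemma \ref{Bn}, applied to the cloning system on $(B_{n})$ whose Witzel-Zaremsky-Thompson group is identified with $BV$. Your additional remark on why part (ii) is unavailable (the projection $\rho_{n}\colon B_{n}\to\mathfrak{S}_{n}$ is nontrivial, so the system is not pure, consistent with $B_{n}$ failing to be bi-orderable for $n\geq 3$) is accurate and a sensible complement to the paper's one-line derivation.
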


%%%%%%%%%%%%%%%%%%%%%%%%%%%%%%%%%%%%%%%%%%%%%%%%%%%%%%%%%%%%%%%%%%%%%%%%%%%%%%%%%%%%%%%%%%%%%%%%%%%%%%%%%%%%%%%%%%%%%%%%%%%%%%%%%%%%%%%%%
\subsection{The pure braided Thompson group}

Let $P_{n}$ be the pure braid group of $n$ strands.
We also denote the restriction on $P_{n}$ 
of the map $\kappa _{n}^{k}\colon B_{n}\to\mathfrak{S}_{n}$ 
we set in the previous subsection again by $\kappa _{n}^{k}$. 
Take $\rho _{n}\colon P_{n}\to\mathfrak{S}_{n}$ 
to be the trivial homomorphism. 
Then we have a well-defined cloning system on $(P_{n})$ 
and the Witzel-Zaremsky-Thompson group $\T{P_{n}}$ for $(P_{n})$
which is isomorphic to the pure braided Thompson group $BF$ 
introduced by Brady, Burillo, Cleary and Stein in \cite{bbcs08}. 
\begin{lemma}[\cite{bg08}]
The orderings on the braid groups $P_{n}$
induced from the Magnus orderings of the free groups 
via the Artin combing 
are compatible with  the cloning maps on $(P_{n})$ defined above. 
\end{lemma}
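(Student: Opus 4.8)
The plan is to exploit the Artin combing, which exhibits $P_{n}$ as the iterated semidirect product associated to the forgetting homomorphisms $d_{j}\colon P_{j}\to P_{j-1}$ deleting the last strand. The kernel $U_{j}=\Ker d_{j}$ is the free group $F_{j-1}=\langle A_{1,j},\dots ,A_{j-1,j}\rangle$, and the ordering in the statement is the convex (lexicographic) ordering of this series: for $g\in P_{n}$ one sets $g>1$ if $d_{n}(g)>1$ in $P_{n-1}$ (recursively), or if $d_{n}(g)=1$ and $g\in U_{n}$ is Magnus-positive in $F_{n-1}$. I would first record that, because the cloning system is pure, $\rho _{n}\equiv 1$ and axiom (1) collapses to $\kappa _{n}^{k}(gh)=\kappa _{n}^{k}(g)\kappa _{n}^{k}(h)$; hence each $\kappa _{n}^{k}\colon P_{n}\to P_{n+1}$ is an injective group homomorphism. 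Proving compatibility therefore amounts to showing that $\kappa _{n}^{k}$ maps the positive cone into the positive cone, and I would do this by induction on $n$.

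The inductive engine is the interaction of $\kappa _{n}^{k}$ with the forgetting maps. Geometrically $\kappa _{n}^{k}$ doubles the $k$-th strand, so deleting the last strand afterwards either undoes the doubling or commutes with it; concretely,
\[
d_{n+1}\circ\kappa _{n}^{n}=\mathrm{id}_{P_{n}},
\qquad
d_{n+1}\circ\kappa _{n}^{k}=\kappa _{n-1}^{k}\circ d_{n}\quad(k<n).
\]
Assume $g>1$ in $P_{n}$. If $k=n$, the first relation gives $d_{n+1}(\kappa _{n}^{n}(g))=g>1$, so $\kappa _{n}^{n}(g)>1$ by the lexicographic convention. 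If $k<n$ and $d_{n}(g)>1$, the second relation together with the inductive hypothesis applied to $\kappa _{n-1}^{k}$ gives $d_{n+1}(\kappa _{n}^{k}(g))=\kappa _{n-1}^{k}(d_{n}(g))>1$, whence $\kappa _{n}^{k}(g)>1$. The only remaining case is $k<n$ with $d_{n}(g)=1$, i.e.\ $g\in U_{n}$ is Magnus-positive; here the second relation shows $\kappa _{n}^{k}(g)\in U_{n+1}$, so everything reduces to the top free-group factor.

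Thus the heart of the proof is to show that the homomorphism $U_{n}\cong F_{n-1}\to F_{n}\cong U_{n+1}$ induced by $\kappa _{n}^{k}$ carries Magnus-positive elements to Magnus-positive elements. A direct computation from the definition of $\kappa _{n}^{k}$ (equivalently, from the strand-doubling picture) gives, on the free generators,
\[
A_{i,n}\mapsto A_{i,n+1}\ (i<k),\qquad
A_{i,n}\mapsto A_{i+1,n+1}\ (i>k),\qquad
A_{k,n}\mapsto A_{k,n+1}A_{k+1,n+1},
\]
so this free-group homomorphism sends each generator either to a single generator or to a positive product of two adjacent generators. Passing to the Magnus embeddings $F_{n-1}\hookrightarrow\Z\langle\langle X_{1},\dots ,X_{n-1}\rangle\rangle$ and $F_{n}\hookrightarrow\Z\langle\langle Y_{1},\dots ,Y_{n}\rangle\rangle$, the map becomes the substitution $X_{i}\mapsto Y_{i}$ for $i<k$, $X_{i}\mapsto Y_{i+1}$ for $i>k$, and $X_{k}\mapsto Y_{k}+Y_{k+1}+Y_{k}Y_{k+1}$. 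I would then analyse the lowest-degree homogeneous part of an element $1+c_{\mu}\mu+\cdots$: the substitution preserves the total degree of the leading part, there replaces $X_{k}$ by $Y_{k}+Y_{k+1}$, and, for the degree-lexicographic monomial order with $Y_{1}<\dots <Y_{n}$, carries the minimal monomial to a minimal monomial with no cancellation, so the sign of the leading coefficient is unchanged.

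\textbf{The main obstacle is exactly this last point}: verifying that merging the two adjacent variables neither lowers the degree of nor cancels the leading Magnus term, so that positivity is genuinely preserved rather than merely the weaker property that generators go to positive elements. Once this is settled, the induction closes and the lemma follows; it is the Magnus analogue of the stability of condition (D) under cloning that underlies the Dehornoy case in Lemma \ref{Bn}.
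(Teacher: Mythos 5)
The paper gives no proof of this lemma at all --- it is imported verbatim from \cite{bg08} --- and your argument is a correct reconstruction of essentially the proof given there: Artin combing, the intertwining relations $d_{n+1}\circ\kappa _{n}^{n}=\mathrm{id}$ and $d_{n+1}\circ\kappa _{n}^{k}=\kappa _{n-1}^{k}\circ d_{n}$ reducing everything to the top free factor, the correct generator images (including $A_{k,n}\mapsto A_{k,n+1}A_{k+1,n+1}$, which one checks via $A_{k,j}A_{k+1,j}=(\sigma _{j-1}\cdots\sigma _{k+2})\sigma _{k+1}\sigma _{k}^{2}\sigma _{k+1}(\sigma _{k+2}^{-1}\cdots\sigma _{j-1}^{-1})$), and the Magnus substitution $X_{k}\mapsto Y_{k}+Y_{k+1}+Y_{k}Y_{k+1}$. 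The no-cancellation point you flag does close as you expect: the degree-$d$ part of the image is obtained by the linear substitution $X_{k}\mapsto Y_{k}+Y_{k+1}$, every expansion term of a monomial $\mu$ is $\geq\phi(\mu)$ where $\phi$ is the letterwise map induced by the monotone alphabet injection, and $\phi$ is strictly order-preserving and injective, so the minimal monomial $\phi(\mu _{0})$ of the image carries exactly the original leading coefficient.
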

\begin{theorem}[\cite{bg08}]
The pure braided Thompson group $BF$ is bi-orderable. 
\end{theorem}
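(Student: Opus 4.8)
The plan is to derive the statement as a direct application of Proposition~\ref{main}(ii) to the cloning system $(P_n)$. First I would observe that this cloning system is pure: by construction $\rho_n\colon P_n\to\mathfrak{S}_n$ is the trivial homomorphism, so the purity hypothesis of Proposition~\ref{main}(ii) is satisfied automatically. Next I would invoke the preceding Lemma, which supplies a bi-ordering of each $P_n$---namely the pullback under the Artin combing of the Magnus ordering of the relevant free group---and asserts that this family of bi-orderings is compatible with the cloning maps $\kappa_n^k$. With purity and compatibility in hand, both hypotheses of Proposition~\ref{main}(ii) hold, and the bi-orderability of $\T{P_{\ast}}\cong BF$ follows at once.

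It is worth spelling out what Proposition~\ref{main}(ii) actually does with these inputs, since that is where the real content sits. Because the system is pure, there is an isomorphism $\T{P_{\ast}}\cong P\T{P_{\ast}}\rtimes F$. The compatible bi-orderings on the $P_n$ first induce a bi-ordering on the normal subgroup $P\T{P_{\ast}}$, by declaring $[T,g,T]$ positive precisely when $1<_n g$, where $n$ is the number of leaves of $T$; well-definedness under expansion uses compatibility together with the fact that each $\kappa_n^k$ is an injective homomorphism when $\rho$ is trivial. The quotient $F$ carries its own bi-ordering, and one then assembles these into a bi-ordering of the semidirect product.

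The step I expect to be the main obstacle is this last one: a semidirect product $N\rtimes H$ of bi-orderable groups need not be bi-orderable unless the conjugation action of $H$ preserves the chosen bi-ordering of $N$. Here this amounts to checking that conjugation by an element of $F$ sends positive elements of $P\T{P_{\ast}}$ to positive elements. As the closing remark indicates, conjugation by $F$ relates the copies $G_T$ and $G_{T'}$ of a fixed $P_n$ sitting over different trees with the same number of leaves, and the compatibility of the orderings with the cloning maps is exactly what guarantees these identifications are order-preserving. Verifying this order-preservation---essentially the straightforward generalization of the argument of~\cite{bg08}---is the technical heart of the proof; once it is in place, the positive cones of $P\T{P_{\ast}}$ and of $F$ combine in the standard way into a positive cone of $\T{P_{\ast}}$, completing the argument.
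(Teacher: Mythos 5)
Your proposal is correct and takes essentially the same approach as the paper: the theorem is deduced by applying Proposition~\ref{main}(ii) to the pure cloning system on $(P_n)$ (purity being automatic since $\rho_n$ is trivial), with the preceding Lemma supplying the compatible bi-orderings pulled back from the Magnus orderings via the Artin combing. Your unpacking of the semidirect-product step---that compatibility is what makes the $F$-conjugation action on $P\T{P_{\ast}}$ order-preserving---correctly identifies the detail the paper's terse proof of Proposition~\ref{main}(ii) leaves implicit, deferring to the argument of \cite{bg08}.
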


%%%%%%%%%%%%%%%%%%%%%%%%%%%%%%%%%%%%%%%%%%%%%%%%%%%%%%%%%%%%%%%%%%%%%%%%%%%%%%%%%%%%%%%%%%%%%%%%%%%%%%%%%%%%%%%%%%%%%%%%%%%%%%%%%%%%%%%%%
\subsection{Witzel-Zaremsky-Thompson group for direct powers of a group}

For arbitrary group $G$, 
set $G_{n}$ to be the $n$-th direct power $G^{n}$ of $G$. 
Fix injective homomorphisms 
$\phi _{1}, \phi _{2}$ of $G$. 
If we define 
$\kappa _{n}^{k}\colon G^{n}\to G^{n+1}$ by 
$\kappa _{n}^{k} (g_{1}, \dots , g_{n})
=(g_{1}, \dots , \phi _{1}(g_{k}), \phi _{2}(g_{k}), \dots , g_{n})$ 
and set $\rho _{n}\colon G^{n}\to\mathfrak{S}_{n}$ 
to be the trivial homomorphism, 
then we have a cloning system on direct powers $(G^{n})$ of the group $G$ 
and the Witzel-Zaremsky-Thompson group $\T{G^{n}}$ 
which was introduced by Tanushevski \cite{tanushevski16}. 
Suppose that $G$ is left-orderable 
and fix a left-ordering $<$ of $G$. 
If the homomorphism $\phi _{1}\colon G\to G$ 
preserves the order of $G$, 
then the lexicographic orderings of $G^{n}$'s 
induced by $<$ 
are compatible with the cloning maps of $(G^{n})$. 
Further if the order on $G$ is bi-invariant, 
then the induced orders on $G^{n}$ are also. 
Since the identity map of $G$ 
trivially preserves all the orderings of $G$, 
by Proposition \ref{main} we have the following Theorem: 
\begin{theorem}
If the group $G$ is left-orderable or bi-orderable, 
then the Witzel-Zaremsky-Thompson group $\T{G^{n}}$ 
of the direct powers of $G$ 
is also left-orderable or bi-orderable, respectively. 
\end{theorem}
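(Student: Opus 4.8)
The plan is to reduce the statement directly to Proposition~\ref{main} by equipping each direct power $G^{n}$ with a lexicographic ordering and then checking compatibility with the cloning maps. First I would fix a left-ordering (resp.\ bi-ordering) $<$ of $G$ and specialize the cloning system to the choice $\phi_{1}=\phi_{2}=\mathrm{id}_{G}$, so that $\kappa_{n}^{k}$ simply duplicates the $k$-th coordinate. Because $\mathrm{id}_{G}$ preserves every ordering of $G$, this choice makes the compatibility verification below effortless, which is precisely the observation recorded in the discussion preceding the theorem.

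Next I would place on each $G^{n}$ the lexicographic ordering induced by $<$, in which $(g_{1},\dots,g_{n})$ is positive exactly when its first coordinate $g_{j}$ distinct from the identity satisfies $1<g_{j}$. A routine check shows that this ordering is left-invariant whenever $<$ is, and bi-invariant whenever $<$ is: comparing two tuples coordinatewise, the first index at which they disagree is unchanged under multiplication, and the required inequality in that single coordinate is inherited from the corresponding invariance in $G$.

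The key step is the compatibility of these orderings with the cloning maps, namely that positivity of $(g_{1},\dots,g_{n})$ forces positivity of $\kappa_{n}^{k}(g_{1},\dots,g_{n})$. Let $j$ be the position of the first non-identity coordinate and distinguish the cases $j<k$, $j=k$, and $j>k$. When $j\neq k$ the first non-identity entry of the cloned tuple is again $g_{j}$ (shifted by one position when $j>k$, since the duplicated $k$-th entry is then $g_{k}=1$), so positivity persists. When $j=k$ the $k$-th entry of the cloned tuple is $\phi_{1}(g_{k})=g_{k}$, which is positive by hypothesis. This is the only place where the assumption on $\phi_{1}$ is used, and with $\phi_{1}=\mathrm{id}_{G}$ it holds automatically.

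Finally, having produced compatible orderings, I would invoke Proposition~\ref{main}(i) in the left-orderable case. In the bi-orderable case, the homomorphisms $\rho_{n}$ are trivial, so the cloning system is pure and Proposition~\ref{main}(ii) applies, giving bi-orderability of $\T{G^{n}}$. I anticipate no real obstacle: the only delicate point is the compatibility check, and the liberty to take $\phi_{1}=\phi_{2}=\mathrm{id}_{G}$ dissolves even that.
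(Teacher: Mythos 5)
Your proposal is correct and follows essentially the same route as the paper: equip each $G^{n}$ with the lexicographic ordering induced by a fixed (left- or bi-) ordering of $G$, verify compatibility with the cloning maps via the position of the first non-identity coordinate (which is exactly where the paper's hypothesis that $\phi_{1}$ preserve the ordering enters, trivially satisfied by $\phi_{1}=\mathrm{id}_{G}$), and then invoke Proposition~\ref{main}, using purity of the system (trivial $\rho_{n}$) for the bi-orderable case. Your write-up merely makes explicit the case analysis $j<k$, $j=k$, $j>k$ that the paper leaves to the reader.
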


%%%%%%%%%%%%%%%%%%%%%%%%%%%%%%%%%%%%%%%%%%%%%%%%%%%%%%%%%%%%%%%%%%%%%%%%%%%%%%%%%%%%%%%%%%%%%%%%%%%%%%%%%%%%%%%%%%%%%%%%%%%%%%%%%%%%%%%%%
%%%%%%%%%%%%%%%%%%%%%%%%%%%%%%%%%%%%%%%%%%%%%%%%%%%%%%%%%%%%%%%%%%%%%%%%%%%%%%%%%%%%%%%%%%%%%%%%%%%%%%%%%%%%%%%%%%%%%%%%%%%%%%%%%%%%%%%%%
\vskip 5pt
\noindent \textbf{Acknowledgments.}
The author is grateful to the referee 
for carefully reading the manuscript and 
pointing out mistake in it. 
The author was supported by JSPS Research Fellowships
for Young Scientists (26$\cdot$110). 

%%%%%%%%%%%%%%%%%%%%%%%%%%%%%%%%%%%%%%%%%%%%%%%%%%%%%%%%%%%%%%%%%%%%%%%%%%%%%%%%%%%%%%%%%%%%%%%%%%%%%%%%%%%%%%%%%%%%%%%%%%%%%%%%%%%%%%%%%
\providecommand{\bysame}{\leavevmode\hbox to3em{\hrulefill}\thinspace}
\providecommand{\MR}{\relax\ifhmode\unskip\space\fi MR }
% \MRhref is called by the amsart/book/proc definition of \MR.
\providecommand{\MRhref}[2]{%
  \href{http://www.ams.org/mathscinet-getitem?mr=#1}{#2}
}
\providecommand{\href}[2]{#2}


\begin{thebibliography}{1}

\bibitem{bbcs08}
T.~Brady, J.~Burillo, S.~Cleary, and M.~Stein, \emph{Pure braid subgroups of
  braided {T}hompson's groups}, Publ. Mat. \textbf{52} (2008), no.~1, 57--89.
  \MR{2384840}

\bibitem{brin07}
M.~G. Brin, \emph{The algebra of strand splitting. {I}. {A} braided version of
  {T}hompson's group {$V$}}, J. Group Theory \textbf{10} (2007), no.~6,
  757--788. \MR{2364825}

\bibitem{bg08}
J.~Burillo and J.~Gonz{\'a}lez-Meneses, \emph{Bi-orderings on pure braided
  {T}hompson's groups}, Q. J. Math. \textbf{59} (2008), no.~1, 1--14.
  \MR{2392498}

\bibitem{dehornoy06}
P.~Dehornoy, \emph{The group of parenthesized braids}, Adv. Math. \textbf{205}
  (2006), no.~2, 354--409. \MR{2258261}

\bibitem{tanushevski16}
S.~Tanushevski, \emph{A new class of generalized {T}hompson's groups and their
  normal subgroups}, Comm. Algebra \textbf{44} (2016), no.~10, 4378--4410.
  \MR{3508306}

\bibitem{wz14p}
S.~Witzel and C.~B. Zaremsky, \emph{Thompson groups for systems of groups, and
  their finiteness properties}, preprint, arXiv:1405.5491.

\end{thebibliography}
\end{document}